\providecommand{\tabularnewline}{\\}
\theoremstyle{plain}
\newtheorem{thm}{\protect\theoremname}
  \theoremstyle{plain}
  \newtheorem{lem}[thm]{\protect\lemmaname}
  \theoremstyle{definition}
  \newtheorem{defn}[thm]{\protect\definitionname}
  \theoremstyle{remark}
  \newtheorem{rem}[thm]{\protect\remarkname}
  \theoremstyle{definition}
  \newtheorem{example}[thm]{\protect\examplename}
  \providecommand{\definitionname}{Definition}
  \providecommand{\examplename}{Example}
  \providecommand{\lemmaname}{Lemma}
  \providecommand{\remarkname}{Remark}
\providecommand{\theoremname}{Theorem}
\begin{document}

\title{Minimal Distance to Approximating Noncontextual System as a Measure
of Contextuality}

\author{Janne V. Kujala}

\date{}
\maketitle
\begin{abstract}
Let random vectors $R^{c}=\{R_{p}^{c}:p\in P_{c}\}$ represent joint
measurements of certain subsets $P_{c}\subset P$ of \emph{properties}
$p\in P$ in different \emph{contexts} $c\in C$. Such a system is
traditionally called \emph{noncontextual} if there exists a jointly
distributed set $\{Q_{p}:p\in P\}$ of random variables such that
$R^{c}$ has the same distribution as $\{Q_{p}:p\in P_{c}\}$ for
all $c\in C.$ A trivial necessary condition for noncontextuality
and a precondition for most approaches to measuring contextuality
is that the system is\emph{ consistently connected}, i.e., all $R_{p}^{c},R_{p}^{c'},\dots$
measuring the same property $p\in P$ have the same distribution.
The Contextuality-by-Default (CbD) approach allows detecting and measuring
``true'' contextuality on top of inconsistent connectedness, but
at the price of a higher computational cost. 

In this paper we propose a novel approach to measuring contextuality
that shares the generality and basic definitions of the CbD approach
and the computational benefits of the previously proposed Negative
Probability (NP) approach. The present approach differs from CbD in
that instead of considering all possible joints of the double-indexed
random variables $R_{p}^{c}$, it considers all possible approximating
\emph{single-indexed} systems $\{Q_{p}:p\in P\}$. The degree of contextuality
is defined based on the minimum possible probabilistic distance of
the actual measurements $R^{c}$ from $\{Q_{p}:p\in P_{c}\}$. We
show that the defined measure agrees with a certain measure of contextuality
of the CbD approach for all systems where each property enters in
exactly two contexts. This measure can be calculated far more efficiently
than the CbD measure and even more efficiently than the NP measure
for sufficiently large systems. The present approach can be modified
so as to agree with the NP measure of contextuality on all consistently
connected systems while extending it to inconsistently connected systems.
\end{abstract}

\section{Introduction}

\label{sec:introduction}As a basic example, consider a so called
Alice--Bob system with properties $a_{1}$, $a_{2}$ for Alice and
$b_{1},$ $b_{2}$ for Bob measured in pairs $(a_{i},b_{j})$ by the
bivariate $\pm1$-valued random variables $(A_{ij},B_{ij})$, $i,j\in\{1,2\}$.
Here $i$ is called Alice's setting and $j$ is called Bob's setting. 

Denoting by $\langle X\rangle$ the expectation of a random variable
$X$, suppose that over several trials with different settings $(i,j)$,
one observes perfectly uniform marginals $\langle A_{ij}\rangle=\langle B_{ij}\rangle=0$
for all $i,j\in\{1,2\}$ together with the product expectations (correlations)
$\langle A_{21}B_{21}\rangle=\langle A_{11}B_{11}\rangle=\langle A_{12}B_{12}\rangle=1$
and $\langle A_{22}B_{22}\rangle=-1$ for the jointly measured pairs
of properties. Assuming now that the system is \emph{noncontextual}
in the sense that it could be represented with fictitious single-indexed
jointly distributed random variables $(A_{1},A_{2},B_{1},B_{2})$
satisfying $(A_{i},B_{j})\sim(A_{ij},B_{ij})$, where $\sim$ stands
for ``has the same distribution as'', we can infer from the observed
correlations that the value of $A_{2}$ is always equal to that of
$B_{1}$ which is always equal to that of $A_{1}$ which is always
equal to that of $B_{2}$. However, the last correlation being $-1$
implies that the value of $A_{2}$ is always opposite to the measured
value of $B_{2}$. This contradiction implies that the system cannot
be noncontextual --- the single-indexed random variables representing
Alice's measurements cannot be jointly distributed with those representing
Bob's measurements. Several approaches have been developed for assessing
such contextual properties of a system. In the rest of this section,
we introduce these approaches specialized to the Alice--Bob system
(the general case will be considered later in Section~\ref{sec:general-def-results}).

In the Contextuality-by-Default (CbD) approach \cite{DK2014Scripta,DK2014PLOSconditionalization,DK2014LNCSQualified,DK2014Advances,DKL2015FooP,DKLC2015},
the jointly distributed random variables $(A_{ij},B_{ij})$ representing
measurements in a given experimental context are called \emph{bunches
}and the sets of random variables $\{A_{i1},A_{i2}\}$ and $\{B_{1j},B_{2j}\}$
for $i,j\in\{1,2\}$ representing measurements of the same property
in different contexts are called \emph{connections}. The random variables
entering a connection are said to be \emph{stochastically unrelated}
since there is no pairing-scheme to obtain a joint distribution for
the values observed in different experimental conditions. The system
is said to be \emph{consistently connected}\footnote{Consistent connectedness has also been referred to as non-signaling
and marginal selectivity in other contexts.} if $A_{i1}\sim A_{i2}$ and $B_{1j}\sim B_{2j}$ for all $i,j\in\{1,2\}$,
that is, if the distribution of $A_{ij}$ measuring Alice's property
$a_{i}$ does not depend on Bob's setting $j$ and the distribution
of $B_{ij}$ measuring Bob's property $b_{j}$ does not depend Alice's
setting $i$. If a system is not consistently connected, it is called
\emph{inconsistently connected}. An inconsistently connected system
is contextual in a trivial way since there is a direct cross-influence
from Bob's setting to the distribution of Alice's measurements or
from Alice's setting to the distribution of Bob's measurements. However,
the CbD approach allows detecting the presence of contexuality of
the type of the opening example on top of inconsistent connectedness.

A \emph{coupling} of random variables $X_{1},\dots,X_{n}$ (which
may be stochastically unrelated) is any jointly distributed random
vector $(\hat{X}_{1},\dots,\hat{X}_{n})$ that satisfies $\hat{X}_{i}\sim X_{i}$
for $i=1,\dots,n$. The coupling $(\hat{X}_{1},\dots,\hat{X}_{n})$
is said to be \emph{maximal}\footnote{\label{fn:maximal-coupling}The maximal couplings $(\hat{X}_{1},\dots,\hat{X}_{n})$
of discrete random variables $(X_{1},\dots,X_{n})$ satisfy

\[
\Pr[\hat{X}_{1}=\dots=\hat{X}_{n}=x]=\min_{i\in\{1,\dots,n\}}\Pr[X_{i}=x]
\]
for all $x$ (see Thorisson \cite{Thorisson2000}, Chapter 1, Theorem~4.2)
and so the maximum coupling probability is given by the sum of the
above probability over all $x$. The same result generalizes to densities
of arbitrary random variables (see \cite{Thorisson2000}, Chapter
3, Section 7).} if the \emph{coupling probability} $\Pr[\hat{X}_{1}=\dots=\hat{X}_{n}]$
is maximum possible given the distributions of $X_{1},\dots,X_{n}$.
In the CbD approach, one considers couplings $\{(\hat{A}_{ij},\hat{B}_{ij})\}_{i,j\in\{1,2\}}$
of the bunches $\{(A_{ij},B_{ij})\}_{i,j\in\{1,2\}}$ as well as the
subcouplings $(\hat{A}_{i1},\hat{A}_{i2})$ and $(\hat{B}_{1j},\hat{B}_{2j})$
for $i,j\in\{1,2\}$ corresponding to connections. A system is said
to be \emph{noncontextual} if there exists a coupling $\{(\hat{A}_{ij},\hat{B}_{ij})\}_{i,j\in\{1,2\}}$
of the bunches $\{(A_{ij},B_{ij})\}_{i,j\in\{1,2\}}$ such that the
subcouplings corresponding to connections are maximal.

For a consistently connected system, this definition reduces to the
existence of a coupling $\{(\hat{A}_{ij},\hat{B}_{ij})\}_{i,j\in\{1,2\}}$
of the bunches $\{(A_{ij},B_{ij})\}_{i,j\in\{1,2\}}$ such that $\hat{A}_{i1}=\hat{A}_{i2}$
and $\hat{B}_{1j}=\hat{B}_{2j}$ for all $i,j\in\{1,2\}$ (and is
therefore equivalent to the traditional understanding of contextuality
for consistently connected systems). If such a coupling cannot be
found, one can measure how far from maximality the subcouplings are
by determining the minimum possible value of 
\begin{equation}
\Delta^{\textnormal{CbD}}=\sum_{i\in\{1,2\}}\Pr[\hat{A}_{i1}\ne\hat{A}_{i2}]+\sum_{j\in\{1,2\}}\Pr[\hat{B}_{1j}\ne\hat{B}_{2j}]\label{eq:AliceBob_DeltaCbD}
\end{equation}
over all couplings $\{(\hat{A}_{ij},\hat{B}_{ij})\}_{i,j\in\{1,2\}}$
of the pairs $\{(A_{ij},B_{ij})\}_{i,j\in\{1,2\}}$. When the system
is inconsistently connected, the above measure of contextuality generalizes
to the excess of $\Delta^{\textnormal{CbD}}$ over its minimum possible
value
\begin{equation}
\Delta_{0}^{\textnormal{CbD}}=\sum_{i\in\{1,2\}}\frac{1}{2}\left|\left\langle A_{i1}\right\rangle -\left\langle A_{i2}\right\rangle \right|+\sum_{j\in\{1,2\}}\frac{1}{2}\left|\left\langle B_{1j}\right\rangle -\left\langle B_{2j}\right\rangle \right|\label{eq:AliceBob_Delta0CbD}
\end{equation}
allowed by the marginal expectations $\langle A_{ij}\rangle,\langle B_{ij}\rangle$,
$i,j\in\{1,2\}$ so that the system is noncontextual in general if
and only $\Delta^{\text{CbD}}-\Delta_{0}^{\text{CbD}}=0$. Other measures
consistent with the definition of contextuality can be used with the
CbD approach as well.

In the negative probability (NP) approach \cite{AbramskyBrandenburger2011,deBarros2014,Oas2014,Spekkens2008},
the system is said to be noncontextual if there exists a joint distribution
of fictitious random variables $(A_{1},A_{2},B_{1},B_{2})$ such that
$(A_{i},B_{j})\sim(A_{ij},B_{ij})$ for all $i,j\in\{1,2\}$. If this
is not possible, then the degree of contextuality is defined as the
minimum possible negative probability mass 
\begin{align*}
\Delta^{\textnormal{NP}} & =\sum_{a_{1},a_{2},b_{1},b_{2}\in\{-1,1\}}\min\{0,p(a_{1},a_{2},b_{1},b_{2})\}\\
 & =-1+\sum_{a_{1},a_{2},b_{1},b_{2}\in\{-1,1\}}\left|p(a_{1},a_{2},b_{1},b_{2})\right|.
\end{align*}
of a negative probability joint $p(a_{1},a_{2},b_{1},b_{2})$ (a real-valued
function summing to 1 over all $a_{1},a_{2},b_{1},b_{2}\in\{-1,1\}$)
of $(A_{1},A_{2},B_{1},B_{2})$ such that $(A_{i},B_{j})\sim(A_{ij},B_{ij})$
for all $i,j\in\{1,2\}$, that is, 
\[
\sum_{a_{3-i},b_{3-j}\in\{-1,1\}}p(a_{1},a_{2},b_{1},b_{2})=\Pr[A_{ij}=a_{i},B_{ij}=b_{j}]
\]
for all $a_{i},b_{j}\in\{-1,1\}$ and $i,j\in\{1,2\}$. Such a negative
probability joint is known to exist provided that $\langle A_{i1}\rangle=\langle A_{i2}\rangle$
and $\langle B_{1j}\rangle=\langle B_{2j}\rangle$ for all $i,j\in\{1,2\}$.

It has been shown \cite{deBarrosDzhafarovKujalaOas2015} that both
approaches give the same numerical values for the degree of contextuality
for the simplest Alice--Bob system considered here as well as for
some other simple systems. The CbD approach, however, is more general
in that it can be directly applied even when $\langle A_{i1}\rangle\ne\langle A_{i2}\rangle$
or $\langle B_{1j}\rangle\ne\langle B_{2j}\rangle$ whereas the negative
probability joint for single-indexed variables only exists when the
system is consistently connected. 

Although the CbD approach has wider applicability, the NP approach
has the benefit of being computationally far more efficient. This
is because the number of variables needed to represent the joint probabilities
of a single-indexed joint is much smaller than the number of variables
needed to represent the double-indexed coupling of the CbD approach. 

In this paper, we present a novel approach to measuring contextuality
that has the same generality and formal language as the CbD approach
while in certain specializations reaching and potentially surpassing
the computational efficiency of the NP approach. Thus, on these desiderata,
the present approach appears to be superior to either of the previous
approaches. Alternatively, the present approach can be formulated
so as to agree with either approach although the computational benefits
over the compared system will then be lost. In particular, a certain
formulation agrees with the NP approach on all consistently connected
systems while extending it to inconsistently connected systems. 

In the following sections, we first sketch the ideas using the simplest
Alice--Bob example considered above. Then, we formulate the general
theory using mathematically rigorous definitions and theorems extending
on the framework of the CbD approach. After that, we illustrate the
computational benefits using the Alice--Bob example with $m$ settings
for Alice and $n$ settings for Bob. We conclude by giving an abstract
view to the logic of the present approach and compare its similarities
and differences to CbD as well as consider their implications.

\section{Defining an approximating non-contextual system: a sketch of the
ideas}

In this section, we introduce the basic ideas using the same Alice--Bob
example as in the introduction.
\begin{lem}
\label{lem:min-pr}Given jointly distributed $\pm1$-valued random
variables $A$, $A'$, and $A''$, we have
\begin{equation}
\frac{1}{2}\left|\left\langle A'\right\rangle -\left\langle A''\right\rangle \right|\le\Pr[A'\ne A'']\le\Pr[A\ne A']+\Pr[A\ne A''].\label{eq:min-pr}
\end{equation}
Furthermore, for any given values of $\left\langle A'\right\rangle ,\left\langle A''\right\rangle \in[-1,1]$
with $\left\langle A\right\rangle $ in between, there exists jointly
distributed $(A,A',A'')$ such that both of the above inequalities
hold as equality. \end{lem}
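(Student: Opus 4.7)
The plan is to handle the two inequalities separately using elementary properties of $\pm 1$-valued random variables, and then construct the equality case explicitly.

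For the upper bound, I would observe the pointwise inequality $\mathbf{1}[A'\ne A'']\le\mathbf{1}[A\ne A']+\mathbf{1}[A\ne A'']$, which holds because if $A=A'$ and $A=A''$ then $A'=A''$. Taking expectations yields the right-hand inequality. For the lower bound, I would use the identity $|A'-A''|=2\cdot\mathbf{1}[A'\ne A'']$, valid for $\pm1$-valued variables, to write
\[
|\langle A'\rangle-\langle A''\rangle|=|\langle A'-A''\rangle|\le\langle|A'-A''|\rangle=2\Pr[A'\ne A''],
\]
which gives the left-hand inequality.

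For the equality clause, I would assume without loss of generality that $\langle A'\rangle\le\langle A\rangle\le\langle A''\rangle$ and first build the marginal of $(A',A'')$. To make the lower bound tight, I need $A''\ge A'$ almost surely, so I support $(A',A'')$ on the three points $(-1,-1),(1,1),(-1,1)$ with probabilities $p_1,p_2,p_3$. Solving the three linear constraints ($\langle A'\rangle$, $\langle A''\rangle$, and total mass $1$) gives
\[
p_1=\tfrac{1-\langle A''\rangle}{2},\quad p_2=\tfrac{1+\langle A'\rangle}{2},\quad p_3=\tfrac{\langle A''\rangle-\langle A'\rangle}{2},
\]
each in $[0,1]$, and by construction $\Pr[A'\ne A'']=p_3=\tfrac12|\langle A'\rangle-\langle A''\rangle|$.

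Finally, to make the triangle inequality tight, I need $A$ to coincide with at least one of $A',A''$ on every outcome. So on the events $(A',A'')=(-1,-1)$ and $(1,1)$ I set $A$ equal to the common value, and on $(A',A'')=(-1,1)$ I let $A=1$ with probability $\lambda$ and $A=-1$ with probability $1-\lambda$. Then exactly one of $A\ne A'$, $A\ne A''$ holds on the disagreement event and neither holds elsewhere, so $\Pr[A\ne A']+\Pr[A\ne A'']=p_3$, matching $\Pr[A'\ne A'']$. A direct computation gives $\langle A\rangle=(1-\lambda)\langle A'\rangle+\lambda\langle A''\rangle$, so choosing $\lambda\in[0,1]$ appropriately realizes any value of $\langle A\rangle$ in $[\langle A'\rangle,\langle A''\rangle]$.

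No step appears genuinely hard; the only minor care needed is verifying the closed-form probabilities $p_1,p_2,p_3$ are nonnegative (which follows from $\langle A'\rangle,\langle A''\rangle\in[-1,1]$ and $\langle A'\rangle\le\langle A''\rangle$) and that the free parameter $\lambda$ spans the whole interval of admissible values of $\langle A\rangle$.
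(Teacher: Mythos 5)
Your proof is correct, and its overall structure (prove each bound, then exhibit a tight joint) matches the paper's, but the individual steps are carried out differently. For the left inequality the paper computes the maximal coupling probability of two $\pm1$-valued variables, $1-\frac{1}{2}|\langle A'\rangle-\langle A''\rangle|$, invoking the general maximal-coupling theorem, whereas you use only $|\langle A'-A''\rangle|\le\langle|A'-A''|\rangle=2\Pr[A'\ne A'']$, which is more elementary and self-contained; the right inequality is the same event-containment argument in both. For the equality clause the paper argues abstractly: take a maximal coupling of $(A',A'')$, set $A=A'$ or $A=A''$ to realize the endpoint values of $\langle A\rangle$, and form a convex combination of the two joints for intermediate values. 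Your three-point distribution on $(-1,-1),(1,1),(-1,1)$ is precisely that maximal (monotone) coupling written out explicitly, and your conditional $\lambda$-randomization of $A$ on the disagreement event is the same convex combination described pointwise, with the benefit that nonnegativity of $p_1,p_2,p_3$, the identity $\Pr[A'\ne A'']=\frac{1}{2}|\langle A'\rangle-\langle A''\rangle|$, and the affine relation $\langle A\rangle=(1-\lambda)\langle A'\rangle+\lambda\langle A''\rangle$ are all verified by direct computation. What the paper's route buys instead is the explicit maximal-coupling formula itself, which it reuses later (e.g.\ in Theorem~\ref{thm:median-for-binary}); your argument establishes the same attainability facts without isolating that formula.
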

\begin{proof}
The maximum coupling probability of $A'$ and $A''$ is given by
\begin{align*}
 & \min\left\{ \Pr[A'=1],\Pr[A''=1]\right\} +\min\left\{ \Pr[A'=-1],\Pr[A''=-1]\right\} \\
 & =\min\left\{ \frac{1}{2}+\frac{1}{2}\left\langle A'\right\rangle ,\frac{1}{2}+\frac{1}{2}\left\langle A''\right\rangle \right\} +\min\left\{ \frac{1}{2}-\frac{1}{2}\left\langle A'\right\rangle ,\frac{1}{2}-\frac{1}{2}\left\langle A''\right\rangle \right\} \\
 & =1-\frac{1}{2}\left[\max\{\left\langle A'\right\rangle ,\left\langle A''\right\rangle \}-\min\{\left\langle A'\right\rangle ,\left\langle A''\right\rangle \}\right]=1-\frac{1}{2}\left|\left\langle A'\right\rangle -\left\langle A''\right\rangle \right|,
\end{align*}
which yields the left inequality and the fact that it can hold as
an equality for some joint with the given marginal expectations. The
right inequality follows from the fact that the event $A'\ne A''$
is contained in the event $A\ne A'$ or $A\ne A''$ (since $A=A'$
and $A=A''$ imply $A'=A''$) and both inequalities can be made to
hold as an equality by letting $(A',A'')$ be a maximal coupling and
then choose $A$ to be either $A'$ or $A''$ to make equality on
the right. This yields the result for $\left\langle A\right\rangle $
equal to $\left\langle A'\right\rangle $ or $\left\langle A''\right\rangle $;
for a value of $\left\langle A\right\rangle $ between $\left\langle A'\right\rangle $
and $\left\langle A''\right\rangle $, we can then take a convex combination
of the joints of $(A,A',A'')$ in the two cases.\end{proof}
\begin{defn}
\label{def:noncontextual}We say that the Alice--Bob system is noncontextual,
if there exists jointly distributed fictitious $\pm1$-valued random
variables $(A_{1},A_{2},B_{1},B_{2})$ such that each pair $(A_{i},B_{j})$
can be coupled with the observable pair $(A_{ij},B_{ij})$ by a coupling
\begin{equation}
((\hat{A}_{i},\hat{B}_{j}),(\hat{A}_{ij},\hat{B}_{ij}))\label{eq:AB-coupling}
\end{equation}
such that over the four such couplings, the sum 
\begin{equation}
\Delta=\sum_{i,j\in\{1,2\}}\left(\Pr[\hat{A}_{ij}\ne\hat{A}_{i}]+\Pr[\hat{B}_{ij}\ne\hat{B}_{j}]\right)\label{eq:consum}
\end{equation}
equals
\begin{equation}
\Delta_{0}=\sum_{i\in\{1,2\}}\frac{1}{2}\left|\left\langle A_{i1}\right\rangle -\left\langle A_{i2}\right\rangle \right|+\sum_{j\in\{1,2\}}\frac{1}{2}\left|\left\langle B_{1j}\right\rangle -\left\langle B_{2j}\right\rangle \right|,\label{eq:minsum}
\end{equation}
which is (by Lemma~\ref{lem:min-pr})\footnote{The precise argument here is that for $i\in\{1,2\}$ and any joint
of $(\hat{A}_{i},\hat{A}_{i1},\hat{A}_{i2})$ having the marginal
expectations $\left\langle A_{i}\right\rangle ,\left\langle A_{i1}\right\rangle ,\left\langle A_{i2}\right\rangle $,
Lemma~\ref{lem:min-pr} implies that $\frac{1}{2}\left|\left\langle A_{i1}\right\rangle -\left\langle A_{i2}\right\rangle \right|\le\Pr[\hat{A}_{i1}\ne\hat{A}_{i}]+\Pr[\hat{A}_{i2}\ne\hat{A}_{i}]$
with equality for some joint and analogously for $(\hat{B}_{i},\hat{B}_{1j},\hat{B}_{2j})$
for $j\in\{1,2\}$. Given joints for which the equality holds, we
obtain a $\Delta$ equaling $\Delta_{0}$ with the couplings $(\hat{A}_{i},\hat{A}_{ij})$
and $(\hat{B}_{j},\hat{B}_{ij})$ defined as marginals of the 3-joints
with $(A_{1},A_{2},B_{1},B_{2})$ given as an arbitrary joint with
the same marginals as $\hat{A}_{1},\hat{A}_{2},\hat{B}_{1},\hat{B}_{2}$.
Conversely, any $A_{1},A_{2},B_{1},B_{2}$ together with couplings
$(\hat{A}_{i},\hat{A}_{ij})$ and $(\hat{B}_{j},\hat{B}_{ij})$ for
$i,j\in\{1,2\}$ can be used to define joints for $(\hat{A}_{i},\hat{A}_{i1},\hat{A}_{i2})$
and $(\hat{B}_{i},\hat{B}_{1j},\hat{B}_{2j})$ whose 2-marginals agree
with $(\hat{A}_{i},\hat{A}_{ij})$ and $(\hat{B}_{j},\hat{B}_{ij})$
and so it follows $\Delta\ge\Delta_{0}$ implying that no value smaller
than $\Delta_{0}$ can be obtained.} the minimum possible value of (\ref{eq:consum}) over all possible
separate couplings $(\hat{A}_{i},\hat{A}_{ij})$ and $(\hat{B}_{j},\hat{B}_{ij})$
of respectively $(A_{i},A_{ij})$ and $(B_{j},B_{ij})$ for $i,j\in\{1,2\}$
and all possible choices of jointly distributed $\pm1$-valued random
variables $(A_{1},A_{2},B_{1},B_{2})$. A measure of contextuality
is given by the minimum possible value of the difference $\Delta-\Delta_{0}$
over the possible couplings and choices of $(A_{1},A_{2},B_{1},B_{2})$.

Intuitively, the single-indexed system $(A_{1},A_{2},B_{1},B_{2})$
in the above definition approximates the double-indexed system $\{(A_{ij},B_{ij}):i,j\in\{1,2\}\}$
and the distance of the double-indexed system to the approximating
single indexed-system is given by the minimum possible value of $\Delta$
over all possible couplings $((\hat{A}_{i},\hat{B}_{j}),(\hat{A}_{ij},\hat{B}_{ij}))$
of $((A_{i},B_{j}),(A_{ij},B_{ij}))$ for all $i,j\in\{1,2\}$. If
this distance equals $\Delta_{0}$, then the error of the approximation
is the minimum possible allowed by the marginal distributions of $A_{ij}$
and $B_{ij}$ for $i,j\in\{1,2\}$ and the system is considered noncontextual
(apart from any inconsistent connectedness). \end{defn}
\begin{thm}
A system is noncontextual according to Definition~\ref{def:noncontextual}
if and only if it is noncontextual according to the standard CbD definition
as well. \end{thm}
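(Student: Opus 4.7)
The plan is to prove the two directions separately, in each case exploiting the fact, already visible in the footnote and in Lemma~\ref{lem:min-pr}, that the per-connection bound $\Pr[\hat{A}_{i1}\neq\hat{A}_{i2}]\ge\tfrac{1}{2}|\langle A_{i1}\rangle-\langle A_{i2}\rangle|$ coincides with the per-triple bound $\Pr[\hat{A}_{i1}\neq\hat{A}_i]+\Pr[\hat{A}_{i2}\neq\hat{A}_i]\ge\tfrac{1}{2}|\langle A_{i1}\rangle-\langle A_{i2}\rangle|$, so that $\Delta$ and $\Delta^{\text{CbD}}$ have a common lower bound $\Delta_0=\Delta_0^{\text{CbD}}$, achieved simultaneously on appropriately chosen joints.

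For the direction CbD $\Rightarrow$ Definition~\ref{def:noncontextual}, I start with a CbD coupling $(\hat{A}_{ij},\hat{B}_{ij})_{i,j\in\{1,2\}}$ in which the connection subcouplings are maximal, so $\Pr[\hat{A}_{i1}\neq\hat{A}_{i2}]=\tfrac{1}{2}|\langle A_{i1}\rangle-\langle A_{i2}\rangle|$ and analogously for $B$. I set $(A_1,A_2,B_1,B_2):=(\hat{A}_{11},\hat{A}_{21},\hat{B}_{11},\hat{B}_{12})$ as a marginal of the CbD joint, and for each $(i,j)$ take the coupling $((\hat{A}_i,\hat{B}_j),(\hat{A}_{ij},\hat{B}_{ij}))$ to be the marginal $((\hat{A}_{i1},\hat{B}_{1j}),(\hat{A}_{ij},\hat{B}_{ij}))$ of the CbD joint. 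Then $\Pr[\hat{A}_{ij}\neq\hat{A}_i]$ vanishes when $j=1$ and equals $\tfrac{1}{2}|\langle A_{i1}\rangle-\langle A_{i2}\rangle|$ when $j=2$, and symmetrically for $B$, so summing gives $\Delta=\Delta_0$.

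For the converse, suppose $(A_1,A_2,B_1,B_2)$ together with four couplings $((\hat{A}_i,\hat{B}_j),(\hat{A}_{ij},\hat{B}_{ij}))$ achieve $\Delta=\Delta_0$. I build an 8-variable joint of $(\hat{A}_{ij},\hat{B}_{ij})_{i,j}$ by first sampling $(A_1,A_2,B_1,B_2)$ and then, conditionally on $(A_i,B_j)$, sampling each $(\hat{A}_{ij},\hat{B}_{ij})$ independently across $(i,j)$ using the conditional law specified by the corresponding coupling. A direct computation shows that the marginal distribution of each $(A_i,\hat{A}_{ij})$ (and hence of $(\hat{A}_i,\hat{A}_{ij})$) agrees with the one in the original coupling, because $(A_i,B_j)\sim(\hat{A}_i,\hat{B}_j)$ by hypothesis. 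Thus the marginal of each bunch $(\hat{A}_{ij},\hat{B}_{ij})$ is correct. To see that the connections are maximal, note that the equality $\Delta=\Delta_0$ combined with the per-triple lower bound from Lemma~\ref{lem:min-pr} forces $\Pr[\hat{A}_{i1}\neq A_i]+\Pr[\hat{A}_{i2}\neq A_i]=\tfrac{1}{2}|\langle A_{i1}\rangle-\langle A_{i2}\rangle|$ for each $i$; applying the triangle-type inequality $\Pr[\hat{A}_{i1}\neq\hat{A}_{i2}]\le\Pr[\hat{A}_{i1}\neq A_i]+\Pr[\hat{A}_{i2}\neq A_i]$ in the constructed joint, sandwiched with the lower bound of Lemma~\ref{lem:min-pr}, yields $\Pr[\hat{A}_{i1}\neq\hat{A}_{i2}]=\tfrac{1}{2}|\langle A_{i1}\rangle-\langle A_{i2}\rangle|$, and analogously for $B$. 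This gives a CbD coupling with maximal connections, i.e., $\Delta^{\text{CbD}}=\Delta_0^{\text{CbD}}$.

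The main obstacle is the converse: while the CbD criterion is a single statement about an 8-variable joint, the hypothesis of Definition~\ref{def:noncontextual} provides four separate 4-variable joints linked only through the single-indexed master joint $(A_1,A_2,B_1,B_2)$. The key idea that resolves this is the conditional-independence gluing above, which uses $(A_1,A_2,B_1,B_2)$ as a common backbone; once the marginals $(A_i,\hat{A}_{ij})$ are seen to be preserved, the maximality of each connection in the glued joint follows automatically from the tightness forced by $\Delta=\Delta_0$.
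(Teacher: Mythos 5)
Your proposal is correct and follows essentially the same route as the paper: the converse direction uses the same gluing of the four couplings into one joint with the single-indexed vector $(A_1,A_2,B_1,B_2)$ as a conditional-independence backbone, followed by the Lemma~\ref{lem:min-pr} sandwich to force maximality of each connection subcoupling, while the forward direction uses the same device of taking $A_i,B_j$ (and $\hat{A}_i,\hat{B}_j$) to be copies of $\hat{A}_{ik},\hat{B}_{kj}$ for a fixed choice of the free index. Your write-up merely presents the two directions in the opposite order and makes the term-by-term tightness argument slightly more explicit than the paper does.
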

\begin{proof}
The four couplings (\ref{eq:AB-coupling}) together with the jointly
distributed variables $(A_{1},A_{2},B_{1},B_{2})$ induce a coupling
$((\hat{A}_{1},\hat{A}_{2},\hat{B}_{1},\hat{B}_{2}),\{(\hat{A}_{ij},\hat{B}_{ij}):i,j\in\{1,2\}\})$
of all variables $((A_{1},A_{2},B_{1},B_{2}),\{(A_{ij},B_{ij}):i,j\in\{1,2\}\})$
with the pairs $(\hat{A}_{ij},\hat{B}_{ij})$ independent of each
other given $(\hat{A}_{1},\hat{A}_{2},\hat{B}_{1},\hat{B}_{2})$:
\[
\xymatrix{ & \hat{A}_{12}\ar@{=}[r] & \hat{B}_{12}\\
\hat{A}_{11} & \hat{A}_{1}\ar@{-}[l]\ar@{-}[u]\ar@{-}[r] & \hat{B}_{2}\ar@{-}[u]\ar@{-}[r]\ar@{-}[d] & \hat{B}_{22}\ar@{=}[d]\\
\hat{B}_{11}\ar@{=}[u] & \hat{B}_{1}\ar@{-}[d]\ar@{-}[l]\ar@{-}[u] & \hat{A}_{2}\ar@{-}[r]\ar@{-}[d]\ar@{-}[l] & \hat{A}_{22}\\
 & \hat{B}_{21} & \hat{A}_{21}\ar@{=}[l]
}
\]
The subcoupling $\{(\hat{A}_{ij},\hat{B}_{ij}):i,j\in\{1,2\}\}$ (leaving
out the single indexed variables) is jointly distributed and (\ref{eq:consum})
equaling (\ref{eq:minsum}) implies by Lemma~\ref{lem:min-pr} that
\begin{align}
 & \sum_{i\in\{1,2\}}\Pr[\hat{A}_{i1}\ne\hat{A}_{i2}]+\sum_{j\in\{1,2\}}\Pr[\hat{B}_{1j}\ne\hat{B}_{2j}]\nonumber \\
 & =\sum_{i\in\{1,2\}}\frac{1}{2}\left|\langle\hat{A}_{i1}\rangle-\langle\hat{A}_{i2}\rangle\right|+\sum_{j\in\{1,2\}}\frac{1}{2}\left|\langle\hat{B}_{1j}\rangle-\langle\hat{B}_{2j}\rangle\right|,\label{eq:normal_criterion}
\end{align}
which is precisely the criterion for noncontextuality according to
the standard CbD definition given by (\ref{eq:AliceBob_DeltaCbD})
and (\ref{eq:AliceBob_Delta0CbD}).

Conversely, suppose that (\ref{eq:normal_criterion}) holds for some
coupling $\{(\hat{A}_{ij},\hat{B}_{ij}):i,j\in\{1,2\}\}$ of $\{(A_{ij},B_{ij}):i,j\in\{1,2\}\}$.
Then, for all $i,j\in\{1,2\}$, we can choose $\hat{A}_{i}$ and $A_{i}$
both equal to $\hat{A}_{ik}$ for arbitrary $k\in\{1,2\}$ and $\hat{B}_{j}$
and $B_{j}$ both equal to $\hat{B}_{kj}$ for arbitrary $k\in\{1,2\}$.
It immediately follows that $((\hat{A}_{i},\hat{B}_{j}),(\hat{A}_{ij},\hat{B}_{ij}))$
is a coupling of $((A_{i},B_{j}),(A_{ij},B_{ij}))$ for all $i,j\in\{1,2\}$
and that these four couplings satisfy 
\begin{align*}
 & \sum_{i,j\in\{1,2\}}\left(\Pr[\hat{A}_{ij}\ne\hat{A}_{i}]+\Pr[\hat{B}_{ij}\ne\hat{B}_{j}]\right)\\
 & =\sum_{i\in\{1,2\}}\Pr[\hat{A}_{i1}\ne\hat{A}_{i2}]+\sum_{j\in\{1,2\}}\Pr[\hat{B}_{1j}\ne\hat{B}_{2j}]\\
 & =\sum_{i\in\{1,2\}}\frac{1}{2}\left|\langle\hat{A}_{i1}\rangle-\langle\hat{A}_{i2}\rangle\right|+\sum_{j\in\{1,2\}}\frac{1}{2}\left|\langle\hat{B}_{1j}\rangle-\langle\hat{B}_{2j}\rangle\right|
\end{align*}
so that Definition~\ref{def:noncontextual} is satisfied.
\end{proof}
It is not essential for the idea of approximating one system with
another that the approximating system is a proper noncontextual system.
We can in fact define a concept of optimal approximation by any system
that can predict consistently connected observable joint distributions:
\begin{defn}
\label{def:approximating-system}A set of bivariate random variables
$\{(A'_{ij},B'_{ij}):i,j\in\{1,2\}\}$ that is consistently connected,
i.e., satisfies $A'_{ij}\sim A'_{ij'}$ and $B'_{ij}\sim B'_{i'j}$
for all $i,i',j,j'\in\{1,2\}$, is said to \emph{approximate optimally}
the system $\{(A{}_{ij},B{}_{ij}):i,j\in\{1,2\}\}$ if there exists
couplings $((\hat{A}_{ij},\hat{B}_{ij}),(\hat{A}'_{ij},\hat{B}'_{ij}))$
for all $i,j\in\{1,2\}$ such that 
\[
\Delta'=\sum_{i,j\in\{1,2\}}\left(\Pr[\hat{A}_{ij}\ne\hat{A}'_{ij}]+\Pr[\hat{B}_{ij}\ne\hat{B}'_{ij}]\right)
\]
equals
\[
\Delta_{0}=\sum_{i\in\{1,2\}}\frac{1}{2}\left|\left\langle A_{i1}\right\rangle -\left\langle A_{i2}\right\rangle \right|+\sum_{j\in\{1,2\}}\frac{1}{2}\left|\left\langle B_{1j}\right\rangle -\left\langle B_{2j}\right\rangle \right|.
\]
This construction is illustrated by the following diagram:
\[
\xymatrix{ &  & \hat{A}_{12}\ar@{=}[r]\ar@{-}[d] & \hat{B}_{12}\ar@{-}[d]\\
 &  & \hat{A}'_{12}\ar@{-}[r] & \hat{B}'_{12}\ar@{.}[dr]^{\sim}\\
\hat{A}_{11}\ar@{-}[r] & \hat{A}'_{11}\ar@{.}[ur]^{\sim} &  &  & \hat{B}'_{22}\ar@{-}[d] & \hat{B}_{22}\ar@{=}[d]\ar@{-}[l]\\
\hat{B}_{11}\ar@{=}[u]\ar@{-}[r] & \hat{B}'_{11}\ar@{-}[u] &  &  & \hat{A}'_{22}\ar@{.}[dl]^{\sim} & \hat{A}_{22}\ar@{-}[l]\\
 &  & \hat{B}'_{21}\ar@{.}[ul]^{\sim} & \hat{A}'_{21}\ar@{-}[l]\\
 &  & \hat{B}_{21}\ar@{-}[u] & \hat{A}_{21}\ar@{=}[l]\ar@{-}[u]
}
\]
\end{defn}
\begin{rem}
Asystem is noncontextual according to Definition~\ref{def:noncontextual}
if and only if it is approximated optimally by a system that is noncontextual
in the traditional sense. 
\end{rem}
Definition~\ref{def:approximating-system} above allows in particular
approximating by a negative probability system:
\begin{defn}
Consider all negative probability joints of $(A_{1},A_{2},B_{1},B_{2})$
having proper marginals for all $(A_{i},B_{j})$ with $\Delta'=\Delta_{0}$
where we define $(A'_{ij},B'_{ij})=(A_{i},B_{j})$. The degree of
contextuality in the system is then defined as the minimum possible
total negative probability mass among all such optimally approximating
negative probability joints.\end{defn}
\begin{example}
\label{ex:EPR}Definition~\ref{def:approximating-system} can also
be applied to a specific model predicting a consistently connected
set of jointly distributed pairs $(A'_{ij},B'_{ij})$, for example,
to the quantum model 
\[
\langle A'_{ij}B'_{ij}\rangle=-\cos(\alpha_{i}-\beta_{j}),\quad\langle A'_{ij}\rangle=\langle B'_{ij}\rangle=\nicefrac{1}{2},\quad i,j\in\{1,2\}
\]
of the Einstein--Podolsky--Rosen (EPR) experiment for photons (see,
e.g., \cite{Aspect1999}). Thus, if the observations deviate somewhat
from consistent connectedness, the above approach still allows one
to test whether the observations are as close to the prediction as
possible ignoring the contextual changes in the marginals. This allows,
for example, $\langle A_{11}\rangle$ and $\langle A_{12}\rangle$
to deviate from the predicted value $\nicefrac{1}{2}$ in some cases,
but only if they deviate to different directions.
\end{example}

\section{General definition and results}

\label{sec:general-def-results}In this section, we formalize the
ideas presented in the previous section into a general and rigorous
mathematical formulation.

Let $P$ denote the set of all properties and $C$ the set of all
contexts. For simplicity, we assume that there is a finite number
of elements in $C$ and $P$ (although most of the results should
be generalizable to countable $C$ and $P$ as well). Let $R^{c}=\{R_{p}^{c}:p\in P^{c}\}$
denote a set of jointly distributed random variables $R_{p}^{c}:\Omega_{c}\to E_{p}$
(with values in arbitrary spaces $E_{p}$) representing the observation
of properties $p\in P^{c}$ that enter in context $c\in C$. Furthermore,
let $C_{p}\subset C$ denote the set of all contexts in which the
property $p$ enters. Following the Contextuality-by-Default terminology,
the jointly observable set $R^{c}$ of random variables entering in
a given context $c$ will be called a \emph{bunch} in the following
(as will any analogously indexed set of jointly distributed random
variables).

The CbD criterion of contextuality is given by the existence of a
coupling $\{\hat{R}^{c}:c\in C\}$ of $\{R^{c}:c\in C\}$ satisfying
$\Delta^{\textnormal{CbD}}=\Delta_{0}^{\textnormal{CbD}}$, where
\begin{equation}
\Delta^{\textnormal{CbD}}=\sum_{p\in P}\Pr[\hat{R}_{p}^{c}\ne\hat{R}_{p}^{c'}\textnormal{ for some }c,c'\in C_{p}]\label{eq:Delta_CbD_general}
\end{equation}
and
\begin{equation}
\Delta_{0}^{\textnormal{CbD}}=\sum_{p\in P}\min_{\substack{\text{all couplings}\\
\{T^{c}:c\in C_{p}\}\text{ of }\{R_{p}^{c}:c\in C_{p}\}
}
}\Pr[T^{c}\ne T^{c'}\textnormal{ for some }c,c'\in C_{p}].\label{eq:Delta0_CbD_general}
\end{equation}
As discussed in Section~\ref{sec:introduction}, this condition implies
that all subcouplings $\{\hat{R}_{p}^{c}:c\in C_{p}\}$ corresponding
to a connection $\{R_{p}^{c}:c\in C_{p}\}$, $p\in P$, are maximal.
This formulation of the criterion also yields a measure of contextuality
$\Delta^{\textnormal{CbD}}-\Delta_{0}^{\textnormal{CbD}}$ to which
we will compare the measure of contextuality of the present approach.
Note however, that the CbD approach allows for other measures of contextuality
as well.

Let us then give the general definitions of the present apporach.
Given jointly distributed single-indexed random variables $\{Q_{p}:p\in P\}$,
we define the bunches $Q^{c}=\{Q_{p}:p\in P^{c}\}$ with elements
indexed as $Q_{p}^{c}=Q_{p}$. 

Given couplings $(\hat{R}^{c},\hat{Q}^{c})$ of $(R^{c},Q^{c})$ for
every $c\in C$, let us denote

\begin{equation}
\Delta=\sum_{p\in P}\sum_{c\in C_{p}}\Pr[\hat{Q}_{p}^{c}\ne\hat{R}_{p}^{c}]\label{eq:sumdiffprob}
\end{equation}
and
\begin{equation}
\Delta_{0}=\sum_{p\in P}\Delta_{p},\label{eq:Delta_0}
\end{equation}
where
\begin{equation}
\Delta_{p}=\min_{\textnormal{all choices of }Q_{p}}\sum_{c\in C_{p}}\min_{\substack{\text{all couplings }(T_{p},S_{p})\\
\textnormal{of }(Q_{p},R_{p}^{c})
}
}\Pr[T_{p}\ne S_{p}].\label{eq:Delta_p}
\end{equation}

\begin{rem}
Note that $\hat{Q}_{p}^{c}$ in (\ref{eq:sumdiffprob}) is distinct
for every $c$ (even if the original variables are determined by $Q_{p}^{c}=Q_{p}$
for all $c$). \end{rem}
\begin{defn}
\label{def:noncontextual-inside-measure}The system is \emph{noncontextual}
if there exists jointly distributed random variables $\{Q_{p}:p\in P\}$
and couplings $(\hat{R}^{c},\hat{Q}^{c})$ of $(R^{c},Q^{c})$ for
every $c\in C$ such that $\Delta=\Delta_{0}$. Furthermore, we define
the minimum possible value of the difference $\Delta-\Delta_{0}$
over all possible choices of the random variables $\{Q_{p}:p\in P\}$
and couplings $\{(\hat{R}^{c},\hat{Q}^{c}):c\in C\}$ as a measure
of contextuality.\end{defn}
\begin{rem}
\label{rem:joint-couplings}For any joint distribution of $\{Q_{p}:p\in P\}$
and any distributions of the couplings $\{(\hat{R}^{c},\hat{Q}^{c}):c\in C\}$,
we can construct all the random variables appearing in these so that
they are jointly distributed on a common probability space and satisfy
$\hat{Q}_{p}^{c}=Q_{p}$ for all $c\in C$ with the bunches $\{\hat{R}^{c}:c\in C\text{\}}$
independent of each other given $\{Q_{p}:p\in P\}$.
\end{rem}

\begin{rem}
\label{rem:componentwise-maximality}For given $Q_{p}$, the couplings
$(T_{p},S_{p})$ of $(Q_{p},R_{p}^{c})$ that minimize the sum in
(\ref{eq:Delta_p}) are precisely those that are maximal. Thus, for
given $\{Q^{c}:c\in C\}$, the couplings $(\hat{R}^{c},\hat{Q}^{c})$
of $(R^{c},Q^{c})$ that yield (\ref{eq:sumdiffprob}) equal to the
sum over $p\in P$ of the sum in (\ref{eq:Delta_p}) are precisely
those whose subcouplings $(\hat{R}_{p}^{c},\hat{Q}_{p}^{c})$ for
all $p\in P^{c}$, $c\in C$, are maximal. 
\end{rem}

\begin{defn}
\label{def:general-approximation}A set $\{Q^{c}:c\in C\}$ of random
bunches $Q^{c}=\{Q_{p}^{c}:p\in P^{c}\}$ satisfying consistent connectedness
condition (i.e., $Q_{p}^{c}\sim Q_{p}^{c'}$ for all $c,c'\in C_{p}$
and all $p\in P$) is said to \emph{approximate optimally} the set
$\{R^{c}:c\in C\}$ of random bunches $R^{c}=\{R_{p}^{c}:p\in P^{c}\}$
if there exist couplings $(\hat{R}^{c},\hat{Q}^{c})$ of $(R^{c},Q^{c})$
for every $c\in C$ such that $\Delta=\Delta_{0}$.
\end{defn}

\begin{defn}
\label{def:NP-inside-measure}An alternative measure of contextuality
is given by the minimum negative probability mass over all negative
probability joints of $\{Q_{p}:p\in P\}$ whose marginals $\{Q_{p}:p\in P^{c}\}$
approximate optimally the observed bunches $\{R^{c}:c\in C\}$. Expanding
the definitions, this means that
\begin{enumerate}
\item for every $c\in C$, the marginal distribution for the subset $Q^{c}=\{Q_{p}:p\in P^{c}\}$
has proper nonnegative probabilities, and
\item over all $c\in C$, there exists couplings $(\hat{R}^{c},\hat{Q}^{c})$
of $(R^{c},Q^{c})$ for every $c\in C$ satisfying $\Delta=\Delta_{0}$.
\end{enumerate}
\end{defn}
Next, we will list several simple theorems characterizing some of
the properties of the above definitions. 
\begin{thm}
\label{thm:equivalence-with-NP}For a consistently connected system,
the measure of Definition~\ref{def:NP-inside-measure} agrees with
the NP measure of contextuality. \end{thm}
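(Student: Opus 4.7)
The plan is to unwind the definitions and verify that, under consistent connectedness, Definition~\ref{def:NP-inside-measure} reduces precisely to the constraints defining the NP measure. The argument breaks into three forward-looking pieces: computing $\Delta_{0}$, translating the condition $\Delta = \Delta_{0}$, and then comparing the resulting optimization with the NP one.

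First I would show that $\Delta_{0} = 0$. Under consistent connectedness, all $R_{p}^{c}$ with $c \in C_{p}$ share a common distribution, so I can choose $Q_{p}$ to have exactly that distribution; then $(Q_{p}, R_{p}^{c})$ admits a perfect coupling with $\Pr[T_{p} = S_{p}] = 1$ for every $c \in C_{p}$. This makes $\Delta_{p} = 0$ for each $p \in P$ and hence $\Delta_{0} = 0$.

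Next I would analyze the condition $\Delta = 0$. Since $\Delta$ is a sum of nonnegative terms $\Pr[\hat{Q}_{p}^{c} \ne \hat{R}_{p}^{c}]$, equality to zero forces $\hat{Q}_{p}^{c} = \hat{R}_{p}^{c}$ almost surely for all $p \in P^{c}$ and $c \in C$. Hence $\hat{Q}^{c}$ and $\hat{R}^{c}$ coincide a.s., which means the marginal distribution of the negative probability joint of $\{Q_{p} : p \in P\}$ on $\{Q_{p} : p \in P^{c}\}$ must equal the observed distribution of $R^{c}$. Conversely, whenever these marginals already agree with the distributions of $R^{c}$, the diagonal coupling $\hat{Q}^{c} = \hat{R}^{c}$ trivially achieves $\Delta = 0$. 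Thus conditions (1) and (2) of Definition~\ref{def:NP-inside-measure} are jointly equivalent, in the consistently connected case, to the single requirement that the marginal of the negative probability joint on each observed bunch $\{Q_{p} : p \in P^{c}\}$ equals the distribution of $R^{c}$; condition (1) is automatic, since an observed distribution is a proper probability distribution.

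Finally I would note that this requirement is precisely the defining constraint of the NP measure (the direct generalization of the Alice--Bob marginal-matching equation given in the introduction): minimize the total negative probability mass over real-valued joints of $\{Q_{p} : p \in P\}$ summing to $1$ whose marginal on each $\{Q_{p} : p \in P^{c}\}$ equals the distribution of $R^{c}$. Since both measures minimize the same objective over the same feasible set, they coincide. The proof is essentially a direct unwinding of definitions, so I do not anticipate a serious obstacle; the only subtlety worth flagging is that a coupling $(\hat{R}^{c}, \hat{Q}^{c})$ of $(R^{c}, Q^{c})$ makes sense in the negative-joint setting precisely because the marginal of the negative joint on $Q^{c}$ is forced (by condition (1), or equivalently by condition (2)) to be a bona fide probability distribution.
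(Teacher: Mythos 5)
Your proposal is correct and follows essentially the same route as the paper's proof: establish $\Delta_{0}=0$ under consistent connectedness, observe that $\Delta=\Delta_{0}$ then forces $\hat{Q}_{p}^{c}=\hat{R}_{p}^{c}$ almost surely so that condition~2 of Definition~\ref{def:NP-inside-measure} reduces to marginal matching $\{Q_{p}:p\in P^{c}\}\sim R^{c}$, which is exactly the NP feasible set with the same objective. Your additional remarks (the explicit converse via the diagonal coupling, condition~1 being automatic, and the coupling being well defined because the relevant marginals are proper) merely spell out details the paper leaves implicit.
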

\begin{proof}
For a consistently connected system we have $\Delta_{0}=0$ and $\Delta=\Delta_{0}$
then implies that $\hat{Q}_{p}^{c}=\hat{R}_{p}^{c}$ for all $p\in P^{c}$
and $c\in C$ and so constraint 2 of Definition~\ref{def:NP-inside-measure}
reduces to the requirement that $\{Q_{p}:p\in P^{c}\}\sim\{R_{c}^{p}:p\in P^{c}\}$
for all $c$. This is precisely the standard NP approach.
\end{proof}

\begin{thm}
\label{thm:equivalence-with-CbD}For a system where every property
$p\in P$ enters in exactly two contexts $c_{p}$ and $c'_{p}$, the
minimum possible value of $\Delta$ over all couplings $\{(\hat{R}^{c},\hat{Q}^{c})\}$
for all $c\in C$ is the same as the minimum possible value of

\begin{equation}
\Delta^{\textnormal{CbD}}=\sum_{p\in P}\Pr[\hat{R}_{p}^{c_{p}}\ne\hat{R}_{p}^{c'_{p}}]\label{eq:cbd_delta}
\end{equation}
over all couplings $\{\hat{R}^{c}:c\in C\}$ of $\{R^{c}:c\in C\}$
and $\Delta_{0}$ equals
\[
\Delta_{0}^{\textnormal{CbD}}=\sum_{p\in P}\min_{\substack{\text{all couplings}\\
(T^{c_{p}},T^{c'_{p}}\}\text{ of }\{R_{p}^{c_{p}},R_{p}^{c'_{p}}\}
}
}\Pr[T^{c_{p}}\ne T^{c'_{p}}]
\]
This implies that under these assumptions, the measure of contextuality
$\Delta-\Delta_{0}$ is equivalent to the CbD definition given by
(\ref{eq:Delta_CbD_general}) and (\ref{eq:Delta0_CbD_general}).\end{thm}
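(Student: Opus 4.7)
The plan is to handle the two claims (the equalities of minimized $\Delta$ and of $\Delta_0$) separately. First I would analyze $\Delta_p$ under the hypothesis $C_p=\{c_p,c'_p\}$. For any random variable $Q_p$, the inner minimum in (\ref{eq:Delta_p}) over couplings of $(Q_p,R_p^{c})$ is the total variation distance $d_{TV}(Q_p,R_p^{c})$ (this is just the maximal coupling identity from footnote~\ref{fn:maximal-coupling}), so
\[
\Delta_p=\min_{Q_p}\bigl(d_{TV}(Q_p,R_p^{c_p})+d_{TV}(Q_p,R_p^{c'_p})\bigr).
\]
Choosing $Q_p=R_p^{c_p}$ shows this is $\le d_{TV}(R_p^{c_p},R_p^{c'_p})$, while the triangle inequality for $d_{TV}$ shows the reverse. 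Since $d_{TV}(R_p^{c_p},R_p^{c'_p})$ is exactly the minimum of $\Pr[T^{c_p}\ne T^{c'_p}]$ over couplings of $\{R_p^{c_p},R_p^{c'_p}\}$, summing over $p$ yields $\Delta_0=\Delta_0^{\textnormal{CbD}}$.

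Next I would prove $\min\Delta\ge\min\Delta^{\textnormal{CbD}}$. Given any $\{Q_p:p\in P\}$ together with couplings $(\hat{R}^c,\hat{Q}^c)$ of $(R^c,Q^c)$ for every $c\in C$, Remark~\ref{rem:joint-couplings} lets me place all these on one probability space with $\hat{Q}_p^c=Q_p$ and the bunches $\hat{R}^c$ conditionally independent given $\{Q_p\}$. In particular, the family $\{\hat{R}^c:c\in C\}$ is now a coupling of the observed bunches, and for each $p$ the triangle inequality gives
\[
\Pr[\hat{R}_p^{c_p}\ne\hat{R}_p^{c'_p}]\le\Pr[\hat{R}_p^{c_p}\ne Q_p]+\Pr[Q_p\ne\hat{R}_p^{c'_p}].
\]
Summing over $p\in P$ yields $\Delta^{\textnormal{CbD}}\le\Delta$ on this coupling, so taking infima proves the inequality.

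Finally, for $\min\Delta\le\min\Delta^{\textnormal{CbD}}$, I start from an arbitrary coupling $\{\hat{R}^c:c\in C\}$ of the bunches achieving $\Delta^{\textnormal{CbD}}$. For each $p\in P$ define $Q_p:=\hat{R}_p^{c_p}$ (fixing one of the two contexts arbitrarily); this gives a joint distribution of $\{Q_p:p\in P\}$ since all the $\hat{R}_p^{c_p}$ live on the common probability space. For each $c\in C$ set $\hat{Q}_p^c:=Q_p$ for $p\in P^c$, so that $(\hat{R}^c,\hat{Q}^c)$ automatically has the correct marginals and is a valid coupling of $(R^c,Q^c)$. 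Then for each $p$, the two summands $\Pr[\hat{Q}_p^{c_p}\ne\hat{R}_p^{c_p}]$ and $\Pr[\hat{Q}_p^{c'_p}\ne\hat{R}_p^{c'_p}]$ reduce to $0$ and $\Pr[\hat{R}_p^{c_p}\ne\hat{R}_p^{c'_p}]$ respectively, so $\Delta=\Delta^{\textnormal{CbD}}$ for this construction, completing the equality of minima. The last sentence of the theorem then follows by subtracting the two equal $\Delta_0$'s from the two equal minima of $\Delta$ and $\Delta^{\textnormal{CbD}}$.

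The only place requiring care is the passage from the separately-specified couplings $(\hat{R}^c,\hat{Q}^c)$ to a single coupling of everything in the first direction; this is precisely what Remark~\ref{rem:joint-couplings} provides, so no real obstacle remains.
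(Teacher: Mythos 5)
Your proof is correct, and for the central claim --- the equality of the minimized $\Delta$ and the minimized $\Delta^{\textnormal{CbD}}$ --- it is essentially the paper's own argument: one direction uses Remark~\ref{rem:joint-couplings} to realize all couplings on one space with $\hat{Q}_{p}^{c}=Q_{p}$ and then the triangle (event-containment) inequality $\Pr[\hat{R}_{p}^{c_{p}}\ne\hat{R}_{p}^{c'_{p}}]\le\Pr[\hat{R}_{p}^{c_{p}}\ne Q_{p}]+\Pr[Q_{p}\ne\hat{R}_{p}^{c'_{p}}]$, and the converse defines $Q_{p}:=\hat{R}_{p}^{c_{p}}$ from a CbD-optimal coupling and sets $\hat{Q}_{p}^{c}:=Q_{p}$, exactly as in the paper. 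Where you genuinely differ is the identity $\Delta_{0}=\Delta_{0}^{\textnormal{CbD}}$: the paper disposes of it indirectly, by noting that both thresholds arise as the minimum of the (already equal) minimal values of $\Delta$ and $\Delta^{\textnormal{CbD}}$ over all systems whose marginals agree with those of $\{R^{c}:c\in C\}$ --- a step that itself tacitly uses that the per-connection minima in (\ref{eq:Delta_p}) and (\ref{eq:Delta0_CbD_general}) can be attained simultaneously. You instead compute $\Delta_{p}$ directly: the inner minimum in (\ref{eq:Delta_p}) is the total variation distance by the maximal-coupling identity of footnote~\ref{fn:maximal-coupling}, and for $C_{p}=\{c_{p},c'_{p}\}$ the choice $Q_{p}\sim R_{p}^{c_{p}}$ plus the triangle inequality for total variation pin down $\Delta_{p}=d_{TV}(R_{p}^{c_{p}},R_{p}^{c'_{p}})$, whence $\Delta_{0}=\Delta_{0}^{\textnormal{CbD}}$ by summation. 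This is more self-contained and makes explicit why the two-context hypothesis is what makes the thresholds coincide (it is exactly the metric property of $d_{TV}$ with the midpoint taken at an endpoint), at the cost of invoking the coupling-vs-TV characterization that the paper only states for discrete variables and densities; at the paper's level of generality (arbitrary $E_{p}$) this is the same measurability caveat the paper itself glosses over, so no real gap. Your closing remark that the last sentence of the theorem follows by subtracting the equal $\Delta_{0}$'s from the equal minima is exactly right.
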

\begin{proof}
Let $\{(\hat{R}^{c},\{\hat{Q}_{p}^{c}:p\in P^{c}\})\}$ for $c\in C$
be the couplings with the minimum possible value of $\Delta$. As
per Remark~\ref{rem:joint-couplings}, we can assume all the couplings
to be jointly distributed with $\hat{Q}_{p}^{c_{p}}=\hat{Q}_{p}^{c'_{p}}$.
Then, $\hat{Q}_{p}^{c_{p}}=\hat{R}_{p}^{c_{p}}$ and $\hat{Q}_{p}^{c'_{p}}=\hat{R}_{p}^{c'_{p}}$
imply $\hat{R}_{p}^{c_{p}}=\hat{R}_{p}^{c'_{p}}$, which yields (by
the same argument as in the proof of Lemma~\ref{lem:min-pr}) 
\[
\Pr[\hat{R}_{p}^{c_{p}}\ne\hat{R}_{p}^{c'_{p}}]\le\Pr[\hat{Q}_{p}^{c_{p}}\ne\hat{R}_{p}^{c_{p}}]+\Pr[\hat{Q}_{p}^{c'_{p}}\ne\hat{R}_{p}^{c'_{p}}].
\]
Thus, considering $\{\hat{R}^{c}:c\in C\}$ as the coupling of $\{R^{c}:c\in C\}$
of the CbD approach, it follows that (\ref{eq:cbd_delta}) is less
than or equal to $\Delta$. Conversely, suppose that $\{\hat{R}^{c}:c\in C\}$
is a coupling of $\{R^{c}:c\in C\}$ with the minimum possible value
of (\ref{eq:cbd_delta}). Then, by defining $Q_{p}=\hat{R}_{p}^{c_{p}}$
and taking $(\hat{Q}^{c},\hat{R}^{c}$) with $\hat{Q}_{p}^{c_{p}}=\hat{Q}_{p}^{c'_{p}}=Q_{p}$
as the couplings of $(Q^{c},R^{c})$, we obtain
\[
\Pr[\hat{Q}_{p}^{c_{p}}\ne\hat{R}_{p}^{c_{p}}]+\Pr[\hat{Q}_{p}^{c'_{p}}\ne\hat{R}_{p}^{c'_{p}}]=\Pr[\hat{R}_{p}^{c_{p}}\ne\hat{R}_{p}^{c'_{p}}]
\]
and so $\Delta$ equals (\ref{eq:cbd_delta}). Since the minimum possible
values of $\Delta$ and $\Delta^{\textnormal{CbD}}$ over their respective
sets of considered couplings are equal, it follows that $\Delta_{0}$
and $\Delta_{0}^{\textnormal{CbD}}$ are also equal since they both
minimize the equal minimal value of $\Delta$ and $\Delta^{\textnormal{CbD}}$
over all systems whose marginals agree with those of $\{R^{c}:c\in C\}$
.
\end{proof}

\begin{thm}
\label{thm:median-for-binary}For $\pm1$-valued random variables,
we have 
\[
\Delta_{p}=\min_{\textnormal{all choices of }Q_{p}}\frac{1}{2}\sum_{c\in C_{p}}|\langle R_{p}^{c}\rangle-\langle Q_{p}\rangle|
\]
and the minimizers are characterized by
\[
\langle Q_{p}\rangle=\mathrm{Median}\{\langle R_{p}^{c}\rangle:c\in C_{p}\}.
\]
When the number of elements in $C_{p}$ is even, the median means
any value between the two middle values. If the property $p$ enters
in exactly two contexts, $C_{p}=\{c_{p},c'_{p}\}$, then $\Delta_{p}$
simplifies into
\[
\Delta_{p}=\frac{1}{2}|\langle R_{p}^{c_{p}}\rangle-\langle R_{p}^{c_{p}'}\rangle|.
\]
\end{thm}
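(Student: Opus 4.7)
The plan is to reduce the formula in two stages: first evaluate the inner minimum over couplings in closed form, then recognize the outer minimum as the classical sum-of-absolute-deviations problem whose minimizer is the median.

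First I would fix $Q_p$ and compute, for each $c \in C_p$, the inner quantity
\[
  \min_{(T_p,S_p)\text{ coupling of }(Q_p,R_p^c)} \Pr[T_p \ne S_p].
\]
By footnote~\ref{fn:maximal-coupling}, this minimum is attained by a maximal coupling, and for two $\pm1$-valued variables the maximum coupling probability is exactly
\[
  \min\{\Pr[Q_p=1],\Pr[R_p^c=1]\} + \min\{\Pr[Q_p=-1],\Pr[R_p^c=-1]\} = 1 - \tfrac{1}{2}|\langle Q_p\rangle - \langle R_p^c\rangle|,
\]
using the same algebraic step as in the displayed computation in the proof of Lemma~\ref{lem:min-pr}. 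Hence the inner minimum equals $\tfrac{1}{2}|\langle R_p^c\rangle - \langle Q_p\rangle|$, which gives immediately the first displayed formula of the theorem.

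Next I would handle the outer minimization. Since $Q_p$ is $\pm1$-valued, $\langle Q_p\rangle$ ranges over the interval $[-1,1]$, and every value $q\in[-1,1]$ is realized by some such $Q_p$ (e.g., $\Pr[Q_p=1]=(1+q)/2$). The only dependence of the sum on $Q_p$ is through $\langle Q_p\rangle$, so the problem reduces to
\[
  \min_{q\in[-1,1]}\; \tfrac{1}{2}\sum_{c\in C_p}|\langle R_p^c\rangle - q|.
\]
This is the classical sum-of-absolute-deviations problem, whose minimizers on $\mathbb{R}$ are exactly the medians (a single point when $|C_p|$ is odd, the full closed interval between the two middle values when $|C_p|$ is even). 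Because every $\langle R_p^c\rangle$ lies in $[-1,1]$, every such median also lies in $[-1,1]$, so the constraint $q\in[-1,1]$ is inactive and the characterization of minimizers carries over.

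Finally, the case $C_p=\{c_p,c_p'\}$ is a direct specialization: any $q$ between $\langle R_p^{c_p}\rangle$ and $\langle R_p^{c_p'}\rangle$ is a median, and for such $q$ one has $|\langle R_p^{c_p}\rangle - q|+|\langle R_p^{c_p'}\rangle - q| = |\langle R_p^{c_p}\rangle - \langle R_p^{c_p'}\rangle|$, yielding the stated simplification. The only subtle point to watch is the realizability condition for $\langle Q_p\rangle$, but as noted this is automatic since the medians lie in the convex hull of the $\langle R_p^c\rangle$; no serious obstacle is expected.
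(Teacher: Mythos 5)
Your proposal is correct and follows essentially the same route as the paper's proof: the inner minimum over couplings is evaluated via the maximal-coupling formula established in Lemma~\ref{lem:min-pr}, giving $\tfrac{1}{2}\left|\langle R_{p}^{c}\rangle-\langle Q_{p}\rangle\right|$, and the outer minimization is then recognized as the classical $L^{1}$-approximation problem solved by the median. You merely spell out details the paper leaves implicit (realizability of every $\langle Q_{p}\rangle\in[-1,1]$ and the two-context specialization), which is fine.
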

\begin{proof}
Since the maximal coupling probability of two $\pm1$-valued random
variables is given by equality on the left side of (\ref{eq:min-pr})
in Lemma~\ref{lem:min-pr}, it can be seen that for $\pm1$-valued
random variables, (\ref{eq:Delta_p}) reduces to finding the best
$L^{1}$-approximator to the means of all $\{R_{p}^{c}:c\in C_{p}\}$
as shown in the statement of this theorem. This problem is solved
by the median.
\end{proof}

\begin{thm}
\label{thm:per-context-cbd-condition}A system is noncontexual according
to Definition~\ref{def:noncontextual-inside-measure} if and only
if there exists jointly distributed random variables $\{Q_{p}:p\in P\}$
that are minimizers of the sum in (\ref{eq:Delta_p}) and have the
property that for each $c\in C$, the system consisting of the two
bunches $S^{1}=Q^{c}$ and $S^{2}=R^{c}$ (both having the same set
of properties) is noncontexual in the CbD sense (or in the sense of
Definition~\ref{def:noncontextual-inside-measure}, since the two
are equal for systems with every property entering in precisely two
contexts). 

Furthermore, for given $\{Q_{p}:p\in P\}$, the minimum possible value
of $\Delta$ over all couplings $(\hat{R}^{c},\hat{Q}^{c})$ of $(R^{c},Q^{c})$
equals the sum over $c\in C$ of the minimum possible values of $\Delta^{\textnormal{CbD}}$
over all couplings $(\hat{S}^{1},\hat{S}^{2})$ of the subsystem $\{S^{1},S^{2}\}=\{Q^{c},R^{c}\}$
with each $\Delta^{\textnormal{CbD}}$ corresponding to the partial
sum
\[
\sum_{p\in C_{p}}\Pr[\hat{Q}_{p}^{c}\ne\hat{R}_{p}^{c}]
\]
appearing in (\ref{eq:sumdiffprob}).\end{thm}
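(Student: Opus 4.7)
The plan is to first establish the decomposition formula (the ``furthermore'' clause), and then derive the noncontextuality characterization by tracking when the resulting chain of inequalities saturates. For any fixed $\{Q_p : p \in P\}$, rewriting (\ref{eq:sumdiffprob}) as $\Delta = \sum_{c \in C} \sum_{p \in P^c} \Pr[\hat{Q}_p^c \ne \hat{R}_p^c]$ makes clear that the couplings $(\hat{R}^c, \hat{Q}^c)$ for distinct $c$ are independent optimization variables, so $\min \Delta$ equals $\sum_{c \in C} \min_{(\hat{R}^c, \hat{Q}^c)} \sum_{p \in P^c} \Pr[\hat{Q}_p^c \ne \hat{R}_p^c]$. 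For each $c$, the inner expression is precisely the CbD quantity $\Delta^{\textnormal{CbD}}$ from (\ref{eq:Delta_CbD_general}) applied to the two-bunch subsystem $\{S^1, S^2\} = \{Q^c, R^c\}$, because in that subsystem every property enters in exactly two contexts, so the event ``$\hat{S}_p^1 \ne \hat{S}_p^2$ for some pair of contexts'' collapses to $\hat{S}_p^1 \ne \hat{S}_p^2$. This proves the second assertion of the theorem.

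For the main characterization, I would assemble the chain of inequalities, valid for arbitrary $\{Q_p\}$ and arbitrary couplings:
\begin{align*}
\Delta &\ge \sum_{c \in C} \min_{(\hat{R}^c, \hat{Q}^c)} \sum_{p \in P^c} \Pr[\hat{Q}_p^c \ne \hat{R}_p^c] \\
&\ge \sum_{c \in C} \sum_{p \in P^c} \min_{(T_p, S_p)} \Pr[T_p \ne S_p] \\
&= \sum_{p \in P} \sum_{c \in C_p} \min_{(T_p, S_p)} \Pr[T_p \ne S_p] \\
&\ge \sum_{p \in P} \Delta_p = \Delta_0,
\end{align*}
where the second inequality amounts to $\Delta^{\textnormal{CbD}} \ge \Delta_0^{\textnormal{CbD}}$ for each subsystem, the equality on the third line is the Fubini-style rearrangement via $p \in P^c \iff c \in C_p$, and the final inequality uses that each inner sum is bounded below by $\Delta_p$, with equality exactly when $Q_p$ realizes the minimum in (\ref{eq:Delta_p}).

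Hence $\Delta = \Delta_0$ is achievable if and only if all three inequalities saturate simultaneously. The first fails only by a sub-optimal choice of couplings for the given $\{Q_p\}$, which can always be remedied; the second saturates exactly when every two-bunch subsystem $\{Q^c, R^c\}$ is CbD-noncontextual (equivalently, by Theorem~\ref{thm:equivalence-with-CbD}, noncontextual in the sense of Definition~\ref{def:noncontextual-inside-measure}, since each property in that subsystem enters precisely two contexts); and the third saturates exactly when each $Q_p$ is a minimizer in (\ref{eq:Delta_p}). The explicit construction realizing equality invokes Remark~\ref{rem:componentwise-maximality}: the CbD-noncontextuality hypothesis furnishes, for each $c$, a coupling $(\hat{Q}^c, \hat{R}^c)$ whose subcouplings are componentwise maximal, and these per-context couplings assembled together with the minimizing $\{Q_p\}$ achieve $\Delta_0$.

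The main obstacle is purely bookkeeping: one must verify that $\Delta_0^{\textnormal{CbD}}$ for the two-bunch subsystem indeed reduces to $\sum_{p \in P^c} \min_{(T_p, S_p)} \Pr[T_p \ne S_p]$ (immediate from (\ref{eq:Delta0_CbD_general}) because each connection in the subsystem has only two elements), and keep the order of the nested minimizations straight so that the saturation analysis correctly extracts the two independent conditions on $\{Q_p\}$. No deeper combinatorial or probabilistic difficulty arises, and the decomposition formula drops out of the same analysis as a byproduct.
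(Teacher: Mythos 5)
Your proposal is correct and fills in exactly the argument the paper compresses into ``follows from Remark~\ref{rem:componentwise-maximality} and the definitions'': the per-context decomposition of $\Delta$, the identification of each per-context sum with $\Delta^{\textnormal{CbD}}$ of the two-bunch subsystem $\{Q^{c},R^{c}\}$, and the saturation analysis separating the minimizer condition on $\{Q_{p}\}$ from componentwise maximality of the subcouplings. This is essentially the same approach as the paper, just written out in full.
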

\begin{proof}
Follows from Remark~\ref{rem:componentwise-maximality} and the definitions.
\end{proof}

\begin{thm}
\label{thm:minsum-two-properties}For a system of $\pm1$-valued random
variables, if exactly two properties $p_{c}$ and $p'_{c}$ enter
a given context $c\in C$, then the minimum possible value of the
partial sum 

\[
\sum_{p\in C_{p}}\Pr[\hat{Q}_{p}^{c}\ne\hat{R}_{p}^{c}]
\]
appearing in (\ref{eq:sumdiffprob}) over all couplings $(\hat{R}^{c},\hat{Q}^{c})$
of $(R^{c},Q^{c})$ is given by 
\[
\frac{1}{2}\max\left\{ |\langle Q_{p_{c}}^{c}Q_{p'_{c}}^{c}\rangle-\langle R_{p_{c}}^{c}R_{p'_{c}}^{c}\rangle|,\ |\langle Q_{p_{c}}^{c}\rangle-\langle R_{p_{c}}^{c}\rangle|+|\langle Q_{p'_{c}}^{c}\rangle-\langle R_{p'_{c}}^{c}\rangle|\right\} .
\]
\end{thm}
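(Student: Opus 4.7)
The plan is to establish matching lower and upper bounds on the partial sum. Abbreviate $\hat{Q}_{1},\hat{Q}_{2},\hat{R}_{1},\hat{R}_{2}$ for $\hat{Q}_{p_c}^c,\hat{Q}_{p'_c}^c,\hat{R}_{p_c}^c,\hat{R}_{p'_c}^c$, and set $a=|\langle Q_{p_c}^c\rangle-\langle R_{p_c}^c\rangle|$, $b=|\langle Q_{p'_c}^c\rangle-\langle R_{p'_c}^c\rangle|$, and $c=|\langle Q_{p_c}^c Q_{p'_c}^c\rangle-\langle R_{p_c}^c R_{p'_c}^c\rangle|$, so that the target formula reads $\frac{1}{2}\max\{c,a+b\}$.

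For the lower bound, I combine two applications of Lemma~\ref{lem:min-pr}. Applied componentwise to each pair $(\hat{Q}_{i},\hat{R}_{i})$, the left inequality of (\ref{eq:min-pr}) yields $\Pr[\hat{Q}_{1}\neq\hat{R}_{1}]+\Pr[\hat{Q}_{2}\neq\hat{R}_{2}]\geq(a+b)/2$. Separately, the coupling conditions $\hat{Q}^c\sim Q^c$ and $\hat{R}^c\sim R^c$ hold jointly on the bunches, so the products $\hat{Q}_{1}\hat{Q}_{2}$ and $\hat{R}_{1}\hat{R}_{2}$ are jointly distributed $\pm 1$-valued variables with expectations $\langle Q^c_{p_c}Q^c_{p'_c}\rangle$ and $\langle R^c_{p_c}R^c_{p'_c}\rangle$; Lemma~\ref{lem:min-pr} applied to them gives $\Pr[\hat{Q}_{1}\hat{Q}_{2}\neq\hat{R}_{1}\hat{R}_{2}]\geq c/2$, and combining with the inclusion $\{\hat{Q}_{1}\hat{Q}_{2}\neq\hat{R}_{1}\hat{R}_{2}\}\subseteq\{\hat{Q}_{1}\neq\hat{R}_{1}\}\cup\{\hat{Q}_{2}\neq\hat{R}_{2}\}$ (both component equalities force product equality) through the union bound gives $\Pr[\hat{Q}_{1}\neq\hat{R}_{1}]+\Pr[\hat{Q}_{2}\neq\hat{R}_{2}]\geq c/2$. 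Taking the larger of the two yields the desired $\frac{1}{2}\max\{c,a+b\}$.

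For achievability, I would construct an explicit $4$-dimensional joint $(\hat{Q}_{1},\hat{Q}_{2},\hat{R}_{1},\hat{R}_{2})$ with the prescribed bivariate marginals that attains this bound. WLOG by flipping individual variable signs, I assume $\langle Q^c_{p_c}\rangle\geq\langle R^c_{p_c}\rangle$ and $\langle Q^c_{p'_c}\rangle\geq\langle R^c_{p'_c}\rangle$, and split into two cases. In Case A ($c\leq a+b$), the aim is a joint in which each pair $(\hat{Q}_{i},\hat{R}_{i})$ is individually a maximal coupling, contributing $a/2+b/2$ to the sum; necessity of $c\leq a+b$ here is witnessed by the Bell-type inequality $\langle U\rangle+\langle V\rangle\leq 1+\langle UV\rangle$ for $U=\hat{Q}_{1}\hat{R}_{1}$, $V=\hat{Q}_{2}\hat{R}_{2}$ (with $\langle UV\rangle=\langle(\hat{Q}_{1}\hat{Q}_{2})(\hat{R}_{1}\hat{R}_{2})\rangle\leq 1-c$), and sufficiency is shown by an explicit convex combination of deterministic joints. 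In Case B ($c>a+b$), individual maximal couplings are infeasible; instead I maximally couple the products $\hat{Q}_{1}\hat{Q}_{2}$ with $\hat{R}_{1}\hat{R}_{2}$ (achieving $\Pr[\hat{Q}_{1}\hat{Q}_{2}\neq\hat{R}_{1}\hat{R}_{2}]=c/2$) and arrange the component mismatches on \emph{disjoint} sub-events of $\{\hat{Q}_{1}\hat{Q}_{2}\neq\hat{R}_{1}\hat{R}_{2}\}$, so that exactly one of $\hat{Q}_{i}\neq\hat{R}_{i}$ flips the product at each point of that event, giving a total of $c/2$.

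The main obstacle is the explicit achievability construction in each case, specifically verifying that the constructed $4$-dimensional joints really have the prescribed bivariate marginals for $(Q^c_{p_c},Q^c_{p'_c})$ and $(R^c_{p_c},R^c_{p'_c})$ simultaneously. Both cases reduce to solving a linear program over $16$ non-negative variables indexed by $\{\pm 1\}^{4}$ subject to six marginal equality constraints, and the given closed-form formula is the known solution of this extremely small LP. Alternatively, by Theorem~\ref{thm:equivalence-with-CbD} the minimum in question equals $\Delta^{\textnormal{CbD}}$ for the rank-$2$ cyclic system $\{Q^c,R^c\}$, so the result could also be quoted from the existing CbD literature on such minimal cyclic systems.
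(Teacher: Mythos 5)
Your lower bound is correct and complete: the componentwise application of the left inequality of Lemma~\ref{lem:min-pr} gives $(a+b)/2$, and the product argument (the products $\hat{Q}_{1}\hat{Q}_{2}$, $\hat{R}_{1}\hat{R}_{2}$ are $\pm1$-valued with the stated expectations, and a product mismatch forces a component mismatch) gives $c/2$. This half is more self-contained than anything in the paper, whose entire proof is a reduction plus a citation: by Theorem~\ref{thm:per-context-cbd-condition}, a coupling of $(R^{c},Q^{c})$ is literally a coupling of the two-bunch system $S^{1}=Q^{c}$, $S^{2}=R^{c}$, so the quantity to be minimized is exactly $\Delta^{\textnormal{CbD}}$ of a cyclic-2 system, and the closed form is quoted from Eq.~(4) of the cited cyclic-systems paper. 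Your fallback remark at the end (via Theorem~\ref{thm:equivalence-with-CbD}, which also applies since each property of $\{Q^{c},R^{c}\}$ enters exactly two contexts) is therefore precisely the paper's route.

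The gap is in your achievability half as a self-contained argument. In Case A you must exhibit a joint of $(\hat{Q}_{1},\hat{Q}_{2},\hat{R}_{1},\hat{R}_{2})$ whose four cyclic 2-marginals are all prescribed: $(\hat{Q}_{1},\hat{Q}_{2})$ and $(\hat{R}_{1},\hat{R}_{2})$ by hypothesis, and $(\hat{Q}_{i},\hat{R}_{i})$ as the (unique) maximal couplings. Your Bell-type inequality only shows that $c\le a+b$ is \emph{necessary} for this; the \emph{sufficiency} --- compatibility of these four edge marginals on a 4-cycle of binary variables --- is exactly the nontrivial content of the cyclic-system results you would otherwise be quoting, and "an explicit convex combination of deterministic joints" is asserted, not produced. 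Case B ("arrange the mismatches on disjoint sub-events") likewise needs verification that the required joint with the prescribed bunch marginals exists, and one must also respect the individual constraints $\Pr[\hat{Q}_{i}\ne\hat{R}_{i}]\ge a/2,\,b/2$ within the total $c/2$ (consistent, but again not constructed). So as written, the optimality direction rests either on an unexecuted finite LP check or on the literature citation --- the latter being acceptable and identical to the paper's proof, but then the case analysis adds nothing beyond your (genuinely useful) lower-bound argument.
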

\begin{proof}
According to Theorem~\ref{thm:per-context-cbd-condition}, this problem
is equivalent to the problem of minimizing $\Delta^{\textnormal{CbD}}$
given by (\ref{eq:Delta_CbD_general}) for the system given by the
bunches $S^{1}=(Q_{p_{c}}^{c},Q_{p'_{c}}^{c})$ and $S^{2}=(R_{p_{c}}^{c},R_{p'_{c}}^{c})$
both having the same two properties identified by their position in
the bunches. This is a so called cyclic-2 system for which the solution
is known to be the above expression (see Eq.~(4) in \cite{KujalaDzhafarov2015}). 
\end{proof}

\section{Computational complexity}

In this section we illustrate the computational benefits of the present
approach using as example Alice--Bob systems with $m$ settings for
Alice and $n$ settings for Bob. 

First, consider again the Alice--Bob example with two settings for
both Alice and Bob. In the standard CbD approach, the coupling 
\[
((\hat{A}_{11},\hat{B}_{11}),(\hat{A}_{12},\hat{B}_{12}),(\hat{A}_{21},\hat{B}_{21}),(\hat{A}_{22},\hat{B}_{22}))
\]
 is represented by the $2^{8}=256$ nonnegative variables 
\[
p_{a_{11}b_{11}a_{12}b_{12}a_{21}b_{21}a_{22}b_{22}}=\Pr[\hat{A}_{ij}=a_{ij},\hat{B}_{ij}=b_{ij}:i,j\in\{1,2\}],
\]
representing the joint probabilities. For each $i,j\in\{1,2\}$, there
are 4 linear equations constraining the distribution of $(\hat{A}_{ij},\hat{B}_{ij})$
to that of the observed pair $(A_{ij},B_{ij})$. The expression (\ref{eq:cbd_delta})
can be evaluated directly from the values of these 256 variables and
so the degree of contextuality is obtained by linear programming,
minimizing this expression given the constraints (and subtracting
$\Delta_{0}^{\textnormal{CbD}}$).

In the NP approach, the negative probability joint of the hypothetical
single-indexed random variables $(A_{1},A_{2},B_{1},B_{2})$ is represented
by the 16 signed variables

\begin{equation}
p_{a_{1}a_{2}b_{1}b_{2}}=\Pr[A{}_{1}=a_{1},A_{2}=a_{2},B_{1}=b_{1},B{}_{2}=b_{2}].\label{eq:np_joint_variables}
\end{equation}
For each $i,j\in\{1,2\}$ there are 4 linear equations constraining
the marginal of $(A_{i},B_{j})$ of (\ref{eq:np_joint_variables})
to the observed joint. To minimize the total negative probability
mass, the signed variables (\ref{eq:np_joint_variables}) have to
be represented by their negative and positive parts,
\[
p_{a_{1}a_{2}b_{1}b_{2}}=p_{a_{1}a_{2}b_{1}b_{2}}^{+}-p_{a_{1}a_{2}b_{1}b_{2}}^{-},\qquad p_{a_{1}a_{2}b_{1}b_{2}}^{+},p_{a_{1}a_{2}b_{1}b_{2}}^{-}\ge0.
\]
The total negative probability mass is then obtained as the linear
expression $\sum_{a_{1},a_{2},b_{1},b_{2}\in\{+1,-1\}}p_{a_{1}a_{2}b_{1}b_{2}}^{-}$
and this expression can be minimized using linear programming given
the $2\cdot16=32$ nonnegative variables and 16 equation constraints.

In the present approach, the approximating noncontextual system is
represented by the 16 nonnegative variables
\begin{equation}
p_{a_{1}a_{2}b_{1}b_{2}}=\Pr[A{}_{1}=a_{1},A_{2}=a_{2},B_{1}=b_{1},B{}_{2}=b_{2}]\label{eq:int_joint_variables}
\end{equation}
and for each $i,j\in\{1,2\}$, the coupling $((\hat{A}_{i},\hat{B}_{j}),(\hat{A}_{ij},\hat{B}_{ij}))$
of $((A_{i},B_{j}),(A_{ij},B_{ij}))$ is represented by the 16 nonnegative
variables 
\begin{equation}
p_{a_{ij}b_{ij}a{}_{i}b_{i}}^{ij}=\Pr[\hat{A}_{ij}=a_{ij},\hat{B}_{ij}=b_{ij},\hat{A}{}_{i}=a{}_{i},\hat{B}{}_{i}=b{}_{i}].\label{eq:ext_coupling_variables}
\end{equation}
For each $i,j\in\{1,2\}$ there are 4 linear equations constraining
the marginal of $(\hat{A}_{ij},\hat{B}_{ij})$ of (\ref{eq:ext_coupling_variables})
to the observed joint and another 4 linear equations constraining
the marginal of $(\hat{A}_{i},\hat{B}_{j})$ in (\ref{eq:ext_coupling_variables})
to agree with the margial $(A_{i},B_{j})$ in (\ref{eq:int_joint_variables}).
The degree of contextuality is obtained by minimizing (\ref{eq:consum})
under these constraints. This formulation has a total of $16+4\cdot16=80$
variables with nonnegativity constraints compared to the $256$ of
the standard CbD approach and it has $4\cdot4+4\cdot4=32$ linear
equations compared to the $4\cdot4=16$ of the standard CbD apporoach.

\begin{table}
\caption{\label{tab:Linear-programming-problem}Linear programming problem
size for the Contextuality-by-Default, Negative Probabilities, and
the present approach with $m$ settings for Alice and $n$ settings
for Bob ($\pm1$ outcomes). The linear programming problem is of the
form $Mq=p$ subject to $q\ge0$, where $M$ is a matrix determined
by $m,n$ and $p$ is a vector of the probabilities of the possible
joint outcomes in each context (padded by the same number of $0$'s
in the present approach). }

\centering{}%
\begin{tabular}{|c|ccc|}
\cline{2-4} 
\multicolumn{1}{c|}{} & CbD & NP  & Present approach\tabularnewline
\hline 
Nonnegative variables (columns of $M$) & $2^{mn}$ & $2^{m+n+1}$ & $2^{m+n}+16mn$\tabularnewline
Equation constraints (rows of $M$) & $4mn$ & $4mn$ & $8mn$\tabularnewline
Inequality constraints & $0$ & $0$ & $0$\tabularnewline
\hline 
\end{tabular}
\end{table}

For a more general Alice--Bob system, with $m$ settings for Alice
and $n$ settings for Bob, the number of nonnegative variables needed
for the present approach is $2^{m+n}+16mn$ with $8mn$ equation constraints.
For the standard CbD approach, there are $2^{mn}$ nonnegative variables
with $4mn$ equation constraints and for the NP approach there are
$2^{m+n+1}$ nonnegative variables with $4mn$ equation constraints.
These numbers are summarized in Table~\ref{tab:Linear-programming-problem}.
It is clear that based on the linear programming problem size, the
present approach should be computationally more efficient than either
the standard CbD or the NP approach: already for $m,n\ge4$, the number
of nonnegative variables needed in the linear programming task is
smaller for the present approach than for either of the two previous
approaches. The number of equation constraints needed is double in
the present approach compared to either of the two previous approaches.
However, this is in fact a benefit as well, since any nonredundant
equation constraints are simply used to eliminate the same number
of variables from the system by linear program solvers.

\section{Conclusions}

We conclude by considering the logic of the present approach in an
abstract setting, with possible generalizations, and its relation
to the previously proposed Contextuality-by-Default approach.

\subsection{Approximation in abstract terms}

Let us denote by $\mathcal{S}$ the class of all systems, by $\mathcal{C}\subset\mathcal{S}$
the class of consistently connected systems, by $\mathcal{N}\subset\mathcal{C}$
the class of noncontextual systems in the traditional sense (i.e.,
determined by a single-indexed system of proper random variables),
and by $\mathcal{S}_{R}\subset\mathcal{S}$ the class of systems $R'\in\mathcal{S}$
whose marginals agree with $R$ (i.e., ${R'}{}_{p}^{c}\sim R_{p}^{c}$
for all $c\in C$, $p\in P_{c}$). In abstract terms, the main idea
of the present approach is to model a (possibly) inconsistently connected
observable system $R\in\mathcal{S}$ by a consistently connected system
$Q$ that approximates it as closely as possible within a given subclass
$\mathcal{C}_{0}\subset\mathcal{C}$ of consistently connected systems.
A certain computationally convenient metric\footnote{It is easy to show that $\Delta$ given by (\ref{eq:sumdiffprob})
and Definition~\ref{def:general-approximation} for couplings $(\hat{R}^{c},\hat{Q}^{c})$
of $(R^{c},Q^{c})$ that minimize (\ref{eq:sumdiffprob}) is indeed
a metric when taken as a function of the two sets of bunches $R=\{R^{c}:c\in C\}$
and $Q=\{Q^{c}:c\in C\}$.} $\Delta(R,Q)$ is defined to measure in a probabilistic sense how
far the observed system $R$ is from its closest approximation. If
the minimum value of $\Delta(R,Q)$ over all $Q\in\mathcal{C}_{0}$
equals $\Delta_{0}(R)$, the minimum value of $\Delta(R',Q)$ over
all $Q\in\mathcal{C}$ and all $R'\in\mathcal{S}_{R}$, then the approximation
is said to be optimal. 

If the approximation is optimal and the approximating system $Q$
is a noncontextual system in the traditional sense (i.e., $Q\in\mathcal{N})$,
then $R$ is considered noncontextual (apart from the trivial cross-influences
that cause the inconsistent connectedness). If optimal approximation
by $Q\in\mathcal{N}$ is not possible, then contextuality can be measured
either as the minimum possible value of $\Delta(R,Q)-\Delta_{0}(R)$
over all approximating $Q\in\mathcal{C}_{0}=\mathcal{N}$ or one can
consider all (contextual or not) optimally approximating $Q\in\mathcal{C}_{0}=\mathcal{C}$
and apply and minimize any measure of contextuality on all such $Q$.
Since $Q$ is consistently connected, the latter formulation allows
generalizing any measure of contextuality that is defined for consistently
connected systems to inconsistently systems. As an example, we have
done this for the previously defined negative probability (NP) measure
of contextuality (see Theorem~\ref{thm:equivalence-with-NP}). Thus,
a measure of contextuality can be considered on the ``outside''
as the minimal distance from an approximating noncontextual system
or ``inside'', by applying and minimizing an existing measure of
contextuality over all optimally approximating consistently connected
systems. 

For now, we have defined the concept of optimal approximation so that
the distance $\Delta(R,Q)$ can be positive only by the amount $\Delta_{0}(R)$
required by the inconsistency of the marginal distributions of $R_{p}^{c}$,
$c\in C$, $p\in P_{c}$. However, if the considered class $\mathcal{C}_{0}$
of models does not include all possible consistently connected marginals,
one possible generalization is to change the definition of the threshold
$\Delta_{0}(R)$ so that $\Delta(R',Q)$ is minimized over $R'\in\mathcal{S}_{R}$
and $Q\in\mathcal{C}_{0}$ (as opposed to $R'\in\mathcal{S}_{R}$
and $Q\in\text{\ensuremath{\mathcal{C}}}$). This would amount to
testing if there is contextuality on top of that predicted by the
considered class $\mathcal{C}_{0}$ of approximating systems and on
top of any changes of the marginals from those allowed by the class
$\mathcal{C}_{0}$ of approximating systems. In Example~\ref{ex:EPR},
for example, $\mathcal{C}_{0}$ is defined by a single model with
uniform marginals and so our standard definition would never consider
an observed system noncontextual if all the observed marginal expectations
of some property deviate from the predicted 0 to the same direction
(this is because optimal approximation would in that case require
the marginal expectations of $Q$ to deviate to the same direction
which is not allowed by the predicted uniform marginals of $Q\in\mathcal{C}_{0}$).
However, $\Delta_{0}(R',Q)$ optimized over $R'\in\mathcal{S}_{R}$
and $Q\in\mathcal{C}_{0}$ would yield a threshold that allows detecting
if the distance of the approximating system $Q\in C_{0}$ is not larger
than the minimum required by the changes in the marginals from those
allowed by $\mathcal{C}_{0}$.

\subsection{Relation to Contextuality-by-Default}

The main difference of the present approach to CbD is that the present
approach does not consider a coupling of \emph{all} random variables
$\{R_{p}^{c}:c\in C,\ p\in P\}$ but only refers to couplings of one
bunch (with the corresponding bunch of the approximating system) at
a time in the definition of $\Delta$ (and subcouplings of those in
the definition of $\Delta_{0}$). As shown by proofs analogous to
those used above, given the coupling $\{\hat{R}^{c}:c\in C\}$ of
$\{R^{c}:c\in C\}$ of the CbD approach, one can always make an arbitrary
choice of $c_{p}\in C_{p}$ for all $p\in P$ to define a single-indexed
system $Q_{p}=\hat{R}_{p}^{c_{p}}$, $p\in P$, that approximates
in some sense the observed system. For systems with each property
entering in precisely two contexts, this single-indexed system approximates
optimally (in the sense of the present approach) the observed system
if and only if the subcouplings of $\{\hat{R}^{c}:c\in C\}$ corresponding
to connections are maximal, that is, if the system is noncontextual
in the CbD sense (see Theorem~\ref{thm:equivalence-with-CbD}). This
allows in particular all the analytic results of the so-called cyclic
systems \cite{KujalaDzhafarovLarsson2015,KujalaDzhafarov2015,DKL2015FooP}
derived in the CbD approach to be used for the present approach. Conversely,
due to the equivalence, the computational advantages of the present
system can be made use of to replace the calculations of the CbD approach
in all systems with each property entering in precisely two contexts. 

However, the present approach is not consistent with CbD in general.
That is, there are systems that are contextual according to one definition
but not according to the other. Still, the general idea of approximating
optimally an inconsistely connected system with a consistently connected
one can be formulated in a way that is consistent with CbD. For completeness,
we will outline such a formulation here.

Given a coupling $\{\hat{R}^{c}:c\in C\}\cup\{\hat{Q}_{p}:p\in P\}$
of the bunches $\{R^{c}:c\in C\}$ and jointly distributed random
variables $\{Q_{p}:p\in P\}$, denote

\begin{equation}
\Delta^{*}=\sum_{p\in P}\Pr[\hat{Q}_{p}\ne\hat{R}_{p}^{c}\textnormal{ for some }c\in C_{p}]\label{eq:sumdiffprob-1}
\end{equation}
and
\begin{equation}
\Delta_{0}^{*}=\sum_{p\in P}\Delta_{p}^{*},\label{eq:Delta_0-1}
\end{equation}
where
\begin{equation}
\Delta_{p}^{*}=\min_{\substack{\text{all choices of }Q_{p}}
}\min_{\substack{\text{all couplings }(T,\{S^{c}:c\in C_{p}\})\\
\textnormal{of }(Q_{p},\{R_{p}^{c}:c\in C_{p}\})
}
}\Pr[T\ne S^{c}\textnormal{ for some }c\in C_{p}].\label{eq:Delta_p-1}
\end{equation}
A system is considered contextual if $\Delta^{*}=\Delta_{0}^{*}$
and a measure of contextuality is given by $\Delta^{*}-\Delta_{0}^{*}$. 

Using analogous proofs as above, it can be shown that this condition
for contextuality is equivalent to that of the CbD approach and that
the resulting measure of contextuality is equivalent to the one we
have used for the CbD approach. However, being compatible with the
CbD approach means that we have to consider a coupling of \emph{all}
variables and so the metric $\Delta^{*}$ is not a function of the
bunches of the original and approximating system but its arguments
are full couplings of all random variables of a system and additionally
the two couplings given as arguments must be defined on the same sample
space. This means in particular that the computational benefits resulting
from the simpler, single-indexed representation will be lost.

Conversely, one can also modify the CbD approach so as to be equivalent
with the standard definition of the present approach. This would amount
to changing the requirement of the maximality of each subcoupling
$\{\hat{R}_{p}^{c}:c\in C_{p}\}$ corresponding to a connection $\{R_{p}^{c}:c\in C_{p}\}$,
$p\in P$, to requiring that there exists a random variable $Q_{p}$
satisfying
\[
\sum_{c\in C_{p}}\Pr[Q_{p}\ne\hat{R}_{p}^{c}]=\Delta_{p}.
\]
Then, the random variables $\{Q_{p}:p\in P\}$ would define an optimally
approximating system satisfying the present definition of noncontextuality.

The remaining question is whether the definition of contextuality
of the CbD approach or the definition of the present approach is more
useful or if one may be more useful in some contexts and the other
in some other contexts. An obvious example of difference between the
two definitions is given when for each property $p\in P$, the marginal
distributions of its observations in two different contexts have disjoint
supports. In that case, the system is always noncontextual according
to the CbD definition, since all subcouplings corresponding to the
connections are maximal. However, according to the present approach,
such a system may be contextual. We will construct one such example.

Suppose the system consists of four bunches $R^{1},\dots,R^{4}$,
each measuring the same two $\pm1$-valued properties $p\in\{1,2\}$.
Suppose $R_{1}^{1}=R_{2}^{1}$ and $R_{1}^{2}=-R_{2}^{2}$ are both
uniformly distributed (so that $\langle R_{1}^{1}\rangle=\langle R_{2}^{1}\rangle=\langle R_{1}^{2}\rangle=\langle R_{2}^{2}\rangle=0$,
$\langle R_{1}^{1}R_{2}^{1}\rangle=1$, and $\langle R_{1}^{2}R_{2}^{2}\rangle=-1$)
and $R_{1}^{3}=R_{2}^{3}=1$ and $R_{1}^{4}=R_{2}^{4}=-1$ (so that
$\langle R_{1}^{3}\rangle=\langle R_{2}^{3}\rangle=1$, $\langle R_{1}^{4}\rangle=\langle R_{2}^{4}\rangle=-1$,
and $\langle R_{1}^{3}R_{2}^{3}\rangle=\langle R_{1}^{4}R_{2}^{4}\rangle=1$).
Then, $R_{p}^{3}$ and $R_{p}^{4}$ have disjoint supports for $p\in\{1,2\}$
and so the system is noncontextual in the CbD sense. Also, the median
of $\left\{ \langle R_{p}^{c}\rangle:c\in\{1,2,3,4\}\right\} =\{0,0,1,-1\}$
is $0$ for $p\in\{1,2\}$ and so, according to Theorem~\ref{thm:median-for-binary},
$\langle Q_{p}\rangle=0$ for $p\in\{1,2\}$ is a necessary condition
for $\Delta=\Delta_{0}$. Theorem~\ref{thm:median-for-binary} also
yields the value $\Delta_{0}=2$ from the differences of marginals:
$\Delta_{p}=\frac{1}{2}(|0-0|+|0-0|+|1-0|+|-1-0|)=1$ for $p=1,2$.
Now, according to Theorem~\ref{thm:minsum-two-properties} the minimum
possible value of the partial sum
\[
\sum_{p\in\{1,2\}}\Pr[\hat{Q}_{p}^{c}\ne\hat{R}_{p}^{c}]
\]
appearing in the definition of $\Delta$ is given by
\[
\frac{1}{2}\max\left\{ |\langle Q_{1}Q_{2}\rangle-\langle R_{1}^{c}R_{2}^{c}\rangle|,\ |\langle Q_{1}\rangle-\langle R_{1}^{c}\rangle|+|\langle Q_{2}\rangle-\langle R_{2}^{c}\rangle|\right\} .
\]
The sum of this expression over $c\in\{1,2,3,4\}$ is
\begin{align*}
 & \frac{1}{2}\max\left\{ |\rho-1|,\ |0-0|+|0-0|\right\} \\
 & +\frac{1}{2}\max\left\{ |\rho-(-1)|,\ |0-0|+|0-0|\right\} \\
 & +\frac{1}{2}\max\left\{ |\rho-1|,\ |0-1|+|0-1|\right\} \\
 & +\frac{1}{2}\max\left\{ |\rho-1|,\ |0-(-1)|+|0-(-1)|\right\} \\
 & =\frac{1}{2}(1-\rho)+\frac{1}{2}(1+\rho)+1+1=3>2=\Delta_{0}
\end{align*}
for all $\rho\in[-1,1]$ and so the system is contextual according
to the present approach.

\section*{Acknowledgements}

This author is grateful to Ehtibar Dzhafarov, Acacio de Barros, and
Gary Oas for discussions related to this work and to Ehtibar Dzhafarov
for many helpful comments.

\bibliographystyle{plain}

\end{document}